\theoremstyle{plain}
\newtheorem{Thm}{Theorem}[section]
\newtheorem{Cor}[Thm]{Corollary}
\theoremstyle{remark}
\numberwithin{equation}{section}
\newcommand{\T}{{\mathbb T}}
\newcommand{\R}{{\mathbb R}}
\newcommand{\D}{{\mathbb D}}
\begin{document}

\title{Exceptional sets for the derivatives of Blaschke products}

\address{Universit\'e de Lyon, Lyon, F-69003, France; Universit\'e Lyon I, Institut Camille Jordan,
 Villeurbanne Cedex F-69622, France; CNRS, UMR5208, Villeurbanne,
 F-69622, France.}
\email{fricain@math.univ-lyon1.fr}

\author{Emmanuel Fricain, Javad Mashreghi}
\address{D\'epartement de math\'ematiques et de statistique,
         Universit\'e Laval,
         Qu\'ebec, QC,
         Canada G1K 7P4.}
\email{Javad.Mashreghi@mat.ulaval.ca}

\thanks{This work was supported by NSERC (Canada) and FQRNT (Qu\'ebec)}

\keywords{Blaschke products, Nevanlinna class, logarithmic
derivative}

\subjclass[2000]{Primary: 30D50, Secondary: 26A12}

\begin{abstract}
We obtain growth estimates for the logarithmic derivative
$B'(z)/B(z)$ of a Blaschke product as $|z| \to 1$ and $z$ avoids
some exceptional sets.
\end{abstract}

\maketitle

\section{Introduction}

Let $f$ be a meromorphic function in the unit disc $\D$. Then its
order is defined by
\[
\sigma = \limsup_{r \to 1^-} \frac{\log^+ T(r)}{\log1/(1-r)},
\]
where
\[
T(r) = \frac{1}{\pi} \,\, \int_{\{|z|<r\}} \,\,
\frac{|f'(z)|^2}{(1+|f(z)|^2)^2} \,\, \log(\, \frac{r}{|z|} \,) \,\,
dx \, dy
\]
is the Nevanlinna characteristic of $f$ \cite{nev70}. Meromorphic
functions of finite order have been extensively studied and they
have numerous applications in pure and applied mathematics, e.g. in
linear differential equations. In many applications a major role is
played by the logarithmic derivative of meromorphic functions and we
need to obtain sharp estimates for the logarithmic derivative as we
approach to the boundary \cite{gun88, gun98}.   In particular, the
following result for the rate of growth of meromorphic functions of
finite order in the unit disc has application in the study of linear
differential equations \cite[Theorem 5.1]{heit00}.

\begin{Thm}  \label{T:CGH}
Let $f$ be a meromorphic function in the unit disc $\D$ of finite
order $\sigma$ and let $\varepsilon>0$. Then the following two
statements hold.
\begin{enumerate}[(a)]
\item There exists a set $E_1 \subset (0,1)$ which satisfies
\[
\int_{E_1} \frac{dr}{1-r} < \infty,
\]
such that, for all $z \in \D$ with $|z| \not\in E_1$, we have
\begin{equation} \label{f'/f}
\bigg|\, \frac{f'(z)}{f(z)} \, \bigg| \leq
\frac{1}{(1-|z|)^{3\sigma+4+\varepsilon}}.
\end{equation}

\item There exists a set $E_2 \subset [0,2\pi)$ whose Lebesgue
measure is zero and a function $R(\theta) : [0,2\pi) \setminus E_2
\longrightarrow (0,1)$ such that for all $z=re^{i\theta}$ with
$\theta \in [0,2\pi) \setminus E_2$ and $R(\theta)<r <1$ the
inequality (\ref{f'/f}) holds.
\end{enumerate}
\end{Thm}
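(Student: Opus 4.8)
The plan is to start from the Poisson--Jensen formula, differentiate it to obtain an exact expression for the logarithmic derivative, and then estimate the resulting terms, isolating the contribution of the zeros and poles as the source of the exceptional sets. Fix $z=re^{i\theta}\in\D$ which is neither a zero nor a pole of $f$, let $\{a_n\}$ and $\{b_m\}$ be the zeros and poles of $f$, and choose an auxiliary radius $R$ with $r<R<1$ (eventually $R=(1+r)/2$). Since $f$ has only finitely many zeros and poles in $|z|<R$, differentiating the analytic completion of the Poisson--Jensen formula gives the identity
\begin{equation*}
\frac{f'(z)}{f(z)} = \frac{1}{2\pi}\int_0^{2\pi}\frac{2Re^{i\phi}}{(Re^{i\phi}-z)^2}\,\log|f(Re^{i\phi})|\,d\phi + \sum_{|a_n|<R}\frac{R^2-|a_n|^2}{(z-a_n)(R^2-\overline{a_n}z)} - \sum_{|b_m|<R}\frac{R^2-|b_m|^2}{(z-b_m)(R^2-\overline{b_m}z)},
\end{equation*}
which reduces everything to estimating one boundary integral and two sums.

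First I would dispose of the terms that need no exceptional set. Using $|Re^{i\phi}-z|\ge R-r$ together with the standard bound $\frac{1}{2\pi}\int_0^{2\pi}\big|\log|f(Re^{i\phi})|\big|\,d\phi = m(R,f)+m(R,1/f)\le 2\,T(R)+O(1)$, the boundary integral is $\lesssim T(R)/(R-r)^2$. In each sum I note $|R^2-\overline{a_n}z|\ge R(R-r)$ and $R^2-|a_n|^2\le 2(1-|a_n|)$, so the factor $1/(R^2-\overline{a_n}z)$ is harmless and contributes at most $\lesssim n(R)/(R-r)$. The finite order hypothesis enters here: from the definition of $\sigma$ one has $T(R)\lesssim (1-R)^{-(\sigma+\varepsilon)}$ and, via Jensen's formula, $n(R)\lesssim (1-R)^{-(\sigma+1+\varepsilon)}$. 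With $R=(1+r)/2$, so that $R-r\asymp 1-R\asymp 1-r$, both contributions are already bounded by a fixed power of $1/(1-r)$. The genuine difficulty is the remaining factor $1/|z-a_n|$ (and $1/|z-b_m|$), singular when $z$ approaches a zero or pole; controlling
\begin{equation*}
\sum_{|a_n|<R}\frac{1-|a_n|}{|z-a_n|}, \qquad \sum_{|b_m|<R}\frac{1-|b_m|}{|z-b_m|}
\end{equation*}
is where the two exceptional sets are produced, and this is the main obstacle.

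For part (a) I use the elementary inequality $|z-a_n|\ge\big||z|-|a_n|\big|=\big|r-|a_n|\big|$, which is sharp at $\theta=\arg a_n$ and is therefore the right tool for a bound uniform in $\theta$; it reduces the problem to the one-dimensional sum $\sum_n (1-|a_n|)/\big|r-|a_n|\big|$. I would then excise from $(0,1)$ the intervals $\big(|a_n|-\delta_n,\,|a_n|+\delta_n\big)$ and the analogous pole intervals, with $\delta_n=(1-|a_n|)^{B}$ for a suitable $B>\sigma+2+\varepsilon$. The union $E_1$ satisfies $\int_{E_1}\frac{dr}{1-r}\le C\sum_n(1-|a_n|)^{B-1}+C\sum_m(1-|b_m|)^{B-1}<\infty$, since $\sum_n(1-|a_n|)^s<\infty$ whenever $s>\sigma+1+\varepsilon$ (a consequence of $n(t)\lesssim(1-t)^{-(\sigma+1+\varepsilon)}$). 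For $r\notin E_1$ one has $\big|r-|a_n|\big|\ge\delta_n$ for every $n$, and a dyadic decomposition of the sum according to the size of $\big|r-|a_n|\big|$ yields a fixed power of $1/(1-r)$; collecting all estimates and tracking the exponents gives (\ref{f'/f}) with the stated exponent $3\sigma+4+\varepsilon$.

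For part (b) the radial inequality is too wasteful, so I would argue in the angular variable. Cover the zeros and poles by the Euclidean discs $D(a_n,\delta_n)$ and $D(b_m,\delta'_m)$, now with the smaller choice $\delta_n=(1-|a_n|)^{B'}$, $B'>\sigma+1+\varepsilon$. The set of angles $\theta$ whose radius meets $D(a_n,\delta_n)$ has measure $\asymp \delta_n/|a_n|$, and $\sum_n(1-|a_n|)^{B'}<\infty$; so by the Borel--Cantelli lemma the set $E_2$ of $\theta$ whose radius meets infinitely many of these discs is null. For $\theta\notin E_2$ there is $R(\theta)<1$ beyond which $re^{i\theta}$ avoids every disc, giving $|z-a_n|\ge\delta_n$ and $|z-b_m|\ge\delta'_m$, and estimating the now genuinely two-dimensional sums along such a ray by a dyadic decomposition in $|z-a_n|$ again produces a power bound, yielding (\ref{f'/f}) for $R(\theta)<r<1$. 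The hard part in both cases is the same: extracting a power bound for $\sum(1-|a_n|)/|z-a_n|$ from the density information $n(t)\lesssim(1-t)^{-(\sigma+1+\varepsilon)}$ while keeping the exceptional set small (finite logarithmic measure in $r$ for (a), zero angular measure for (b)), and the balance between the excision radius $\delta_n$ and the size of that set is the crux of the argument.
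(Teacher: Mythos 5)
This theorem is not proved in the paper at all: it is quoted as background from the cited reference \cite{heit00} (Theorem 5.1 there), so there is no internal proof to compare against. Your argument is, in substance, the standard proof from that literature, and it is sound: differentiate the Poisson--Jensen formula (your identity is correct), take $R=(1+r)/2$, bound the boundary integral by $T(R)/(R-r)^2 \lesssim (1-r)^{-(\sigma+2+\varepsilon)}$, use $n(R)\lesssim (1-R)^{-(\sigma+1+\varepsilon)}$ (hence $\sum_n (1-|a_n|)^s<\infty$ for $s>\sigma+1+\varepsilon$), and excise neighbourhoods of the zeros and poles --- radial intervals for (a), Euclidean discs plus Borel--Cantelli for (b) --- to control $\sum_n (1-|a_n|)/|z-a_n|$. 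The one step you assert without executing, the dyadic decomposition, does go through: for the terms with $\big||z|-|a_n|\big|\ge 1-r$ in (a) (resp. $|z-a_n|\ge (1-r)/2$ in (b)) each summand is $O(1)$, since $1-|a_n|\le (1-r)+\big||z|-|a_n|\big|$, so that block is $O(n(R))$; for the remaining terms one has $1-|a_n|\asymp 1-r$ (using $|a_n|<R$), so outside the excised set each summand is $\lesssim (1-r)^{1-B}$ and there are at most $n(R)$ of them. Collecting exponents, your choices $B>\sigma+2+\varepsilon$ and $B'>\sigma+1+\varepsilon$ give bounds of order $(1-r)^{-(2\sigma+3+O(\varepsilon))}$ in (a) and $(1-r)^{-(2\sigma+2+O(\varepsilon))}$ in (b), which is in fact stronger than the stated exponent $3\sigma+4+\varepsilon$ once $\varepsilon$ is small (and it suffices to prove the statement for small $\varepsilon$, since it weakens as $\varepsilon$ grows); the multiplicative constants are absorbed into the spare power of $1/(1-r)$ by adjoining an interval $[0,r_0]$ to $E_1$ (harmless for its logarithmic measure) resp. by increasing $R(\theta)$. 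So your proposal is a correct and complete-in-outline proof of a result the paper itself only cites.
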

\noindent 
Clearly, the relation (\ref{f'/f})
can also be written as
\[
\bigg|\, \frac{f'(z)}{f(z)} \, \bigg| =
\frac{O(1)}{(1-|z|)^{3\sigma+4+\varepsilon}}
\]
as $|z| \to 1$. But we should note that in case (b) it does not hold
uniformly with respect to $|z|$.

Let $(z_n)_{n \geq 1}$ be a sequence in the unit disc satisfying the
Blaschke condition
\begin{equation} \label{E:blascon}
\sum_{n=1}^{\infty} (1-|z_n|) < \infty.
\end{equation}
Then the Blaschke product
\[
B(z) = \prod_{n=1}^{\infty} \frac{|z_n|}{z_n} \,\,
\frac{z_n-z}{1-\bar{z}_n \, z}
\]
is an analytic function in the unit disc with  order $\sigma=0$ and
\begin{equation} \label{E:estb'11}
\frac{B'(z)}{B(z)}  = \sum_{n=1}^{\infty}
\frac{1-|z_n|^2}{(1-\bar{z}_n \, z)(z-z_n)}.
\end{equation}
Thus Theorem \ref{T:CGH} implies that, for any $\varepsilon>0$,
\[
\bigg|\,\sum_{n=1}^{\infty} \frac{1-|z_n|^2}{(1-\bar{z}_n \,
z)(z-z_n)}\,\bigg| = \frac{O(1)}{(1-|z|)^{4+\varepsilon}}
\]
as $|z| \to 1^-$ in any of the two manners explained above. In this
paper,  instead of (\ref{E:blascon}), we pose more restrictive
conditions on the rate of convergence of zeros $z_n$ and instead we
improve the exponent $4+\varepsilon$. The most common condition is
\begin{equation} \label{E:blasconalalp}
\sum_{n=1}^{\infty} (1-|z_n|)^\alpha < \infty,
\end{equation}
for some $\alpha \in (0,1]$. However, we consider a more general
assumption
\begin{equation} \label{E:blasconalgen}
\sum_{n=1}^{\infty} h(1-|z_n|) < \infty,
\end{equation}
where $h$ is a positive continuous function satisfying certain
smoothness conditions which will be described below.  Our main
prototype for $h$ is
\begin{equation} \label{E:defh}
h(t) = t^\alpha \,\, (\log 1/t)^{\alpha_1} \,\, (\log_2
1/t)^{\alpha_2} \,\, \cdots \,\, \,\, (\log_n 1/t)^{\alpha_n},
\end{equation}
where $\log_n = \log\log\cdots\log$ ($n$ times), $\alpha \in (0,1]$
and $\alpha_1,\alpha_2, \cdots, \alpha_n \in \R$. If $\alpha=1$ the
first nonzero exponent among $\alpha_1,\alpha_2, \cdots, \alpha_n$
is positive \cite{jmcmft}.

The function $h$ is usually defined in an open interval
$(0,\epsilon)$. Of course, by extending its domain of definition, we
may assume that $h$ is defined on the interval $(0,1)$, or if
required, on the entire positive real axis. Moreover, since a
Blaschke sequence satisfies (\ref{E:blascon}), the condition
(\ref{E:blasconalgen}) will provide further information about the
rate of increase of the zeros provided that $h(t) \geq C \, t$ as $t
\to 0$.

The condition (\ref{E:blasconalalp}) has been extensively studied by
many authors \cite{ak74, ak76, ahern79, kutbi02, Li76, Pr73} to
obtain estimates for the integral means of the derivative of
Blaschke products. We \cite{fm1} have recently shown that many of
these estimates can be generalized for Blaschke products satisfying
(\ref{E:blasconalgen}).

\section{Circular Exceptional Sets}
The function $h$ given in (\ref{E:defh}) satisfies the following
conditions:
\begin{enumerate}[a)]
\item $h$ is continuous, positive and increasing with $h(0+)=0$;

\item $h(t)/t$ is decreasing;
\end{enumerate}
In the following, we just need these conditions. Hence, we state our
results for a general function $h$ satisfying $a)$ and $b)$.

\begin{Thm} \label{T:caseles12}
Let $(z_n)_{n \geq 1}$ be a sequence in the unit disc satisfying
\[
\sum_{n=1}^{\infty} h(1-|z_n|) < \infty
\]
and let $B$ be the Blaschke product formed with zeros $z_n$, $n \geq
1$. Let $\beta \geq 1$. Then there is an exceptional set $E \subset
(0,1)$ such that
\[
\int_{E} \,\,\frac{dt}{(1-t)^\beta} \,\, < \infty
\]
and that
\[
\bigg|\, \frac{B'(z)}{B(z)} \,\bigg| =  \frac{o(1)}{(1-|z|)^\beta
\,\, h^2(1-|z|)}
\]
as $|z| \to 1^-$ with $|z| \not\in E$.
\end{Thm}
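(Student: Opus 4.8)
The plan is to control the sum in (\ref{E:estb'11}) term by term and then package the bad directions into a circular exceptional set whose size is measured by $\int_E dt/(1-t)^\beta$.

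The plan is to bound $|B'(z)/B(z)|$ termwise from the series (\ref{E:estb'11}), isolate the two harmless factors in each summand, and absorb the genuinely dangerous factor $|z-z_n|$ into a radius-only exceptional set. First I would write, with $r=|z|$, $r_n=|z_n|$ and $d_n=1-r_n$,
\[
\left| \frac{B'(z)}{B(z)} \right| \le \sum_{n=1}^\infty \frac{1-r_n^2}{|1-\bar z_n z|\,|z-z_n|},
\]
and then use the three elementary bounds $1-r_n^2\le 2d_n$, $|1-\bar z_n z|\ge 1-rr_n\ge 1-r=:\rho$, and, crucially, $|z-z_n|\ge\big||z|-|z_n|\big|=|r-r_n|$. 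The last estimate is uniform in $\arg z$, which is exactly why a circular exceptional set will suffice. This yields
\[
\left| \frac{B'(z)}{B(z)} \right| \le \frac{2}{\rho}\sum_{n=1}^\infty \frac{d_n}{|r-r_n|},
\]
so the problem reduces to showing that the radial sum $S(r)=\sum_n d_n/|r-r_n|$ is $o\big(\rho^{1-\beta}/h^2(\rho)\big)$ off a set $E$ with $\int_E(1-t)^{-\beta}\,dt<\infty$.

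Next I would define the exceptional set. Put $\eta_n=\min\{d_n/2,\ d_n^\beta h(d_n)\}$, let $I_n=(r_n-\eta_n,r_n+\eta_n)$ and $E=\bigcup_n I_n$. On $I_n$ one has $1-t\ge d_n/2$, so $\int_{I_n}(1-t)^{-\beta}\,dt\le 2^{\beta+1}d_n^{-\beta}\eta_n\le 2^{\beta+1}h(d_n)$, whence $\int_E(1-t)^{-\beta}\,dt\le 2^{\beta+1}\sum_n h(d_n)<\infty$ by hypothesis. By construction, for every $r\notin E$ we have $|r-r_n|\ge\eta_n$ for all $n$.

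Finally, to estimate $S(r)$ for $r\notin E$ as $\rho\to 0$, I would split the sum into three bands according to the size of $d_n$ relative to $\rho$. For $d_n\le\rho/2$ one has $|r-r_n|=\rho-d_n\ge\rho/2$; using that $h(t)/t$ is decreasing (hypothesis b)) to write $d_n\le\frac{\rho}{2h(\rho/2)}h(d_n)$, this band contributes at most $h(\rho/2)^{-1}\sum_{d_n\le\rho/2}h(d_n)$, a vanishing tail divided by $h(\rho/2)$. For $d_n\ge 2\rho$ one has $|r-r_n|=d_n-\rho\ge d_n/2$, so each term is $\le 2$, and since $h$ is increasing (hypothesis a)) the number of such $n$ is at most $h(2\rho)^{-1}\sum_n h(d_n)$. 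Only the critical band $\rho/2<d_n<2\rho$ sees the exceptional set: there $|r-r_n|\ge\eta_n=d_n^\beta h(d_n)$, and bounding $d_n^{1-\beta}$ and $1/h(d_n)$ in terms of $\rho$ and $h(d_n)/h^2(\rho/2)$ reduces this band to another vanishing tail of $\sum_n h(d_n)$. In every case the two monotonicity hypotheses yield the comparisons $h(\rho)/2\le h(\rho/2)\le h(\rho)\le h(2\rho)$, so after multiplying by $\rho^{\beta-1}h^2(\rho)$ each band tends to $0$; this is what produces the $o(1)$. The main obstacle is the critical band: the exponent in $\eta_n=d_n^\beta h(d_n)$ must be large enough that $\sum_{\mathrm{band}}1/\eta_n$ is controlled by $\rho^{-\beta}h^{-2}$ times a vanishing tail, yet small enough that $\sum_n \eta_n d_n^{-\beta}=\sum_n h(d_n)$ converges — it is precisely the balance between these two demands, mediated by the doubling-type estimates for $h$, that makes the theorem work.
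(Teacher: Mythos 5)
Your proof is correct and follows essentially the same route as the paper: the same radial exceptional set (intervals about $|z_n|$ of half-length comparable to $(1-|z_n|)^\beta h(1-|z_n|)$, whose $\int_E dt/(1-t)^\beta$ is controlled by $\sum_n h(1-|z_n|)$), the same splitting of the series (\ref{E:estb'11}) according to the distance of $|z_n|$ from $|z|$ at scale $(1-|z|)/2$, and the same monotonicity tricks for $h(t)$ and $h(t)/t$, with the final $o(1)$ coming in both arguments from the vanishing tails of $\sum_n h(1-|z_n|)$ together with $(1-|z|)^{\beta-1}h(1-|z|)\to 0$. The only cosmetic differences are that you bound $|1-\bar z_n z|$ below by $1-|z|$ and treat the far zeros in two bands (a comparison via $h(t)/t$ for $1-|z_n|\le (1-|z|)/2$, a Chebyshev count for $1-|z_n|\ge 2(1-|z|)$), where the paper keeps the factor $1-|z|\,|z_n|$ and invokes the counting-function estimate $n(t)=o(1)/h(1-t)$; these are equivalent in substance.
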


\begin{proof}
Without loss of generality, assume that $h(t)<1$ for $t \in (0,1)$.
Let
\[
E = \bigcup_{n=1}^{\infty} \bigg(\, |z_n|-(1-|z_n|)^\beta
h(1-|z_n|), \,\, |z_n|+(1-|z_n|)^\beta h(1-|z_n|) \,\bigg).
\]
In the definition of $E$ we implicitly assume that
$|z_n|-(1-|z_n|)^\beta h(1-|z_n|)>0$ in order to have $E \subset
(0,1)$. Certainly this condition holds for large values of $n$. If
it does not hold for some small values of $n$, we simply remove
those intervals from the definition of $E$.

Let $z \in \D$ with $|z| \not\in E$ and fix $0<\delta \leq
(1-|z|)/2$. By (\ref{E:estb'11}), we have
\[
\frac{B'(z)}{B(z)} = \bigg(\, \sum_{\big|\, |z|-|z_n| \,\big| \geq
\delta}+\sum_{\big|\, |z|-|z_n| \,\big| < \delta} \,\bigg) \,
\frac{1-|z_n|^2}{(1-\bar{z}_n \, z)(z-z_n)}.
\]
We use different techniques to estimate each sum. For the first sum
we have
\begin{eqnarray*}
\sum_{\big|\, |z|-|z_n| \,\big| \geq \delta}
\frac{1-|z_n|^2}{|1-\bar{z}_n \, z| \, |z-z_n|}
&\leq&
\frac{2}{\delta} \,\, \sum_{\big|\, |z|-|z_n| \,\big| \geq \delta}
\frac{1-|z_n|}{1-|z_n| \, |z|}.
\end{eqnarray*}
But
\[
\frac{1-|z_n|}{1-|z| \, |z_n|} =
\bigg(\, \frac{1-|z_n|}{h(1-|z_n|)} \, \frac{h(1-|z| \,
|z_n|)}{1-|z| \, |z_n|} \,\bigg) \,\,
\bigg(\,  \frac{h(1-|z_n|)}{h(1-|z| \, |z_n|)} \,\bigg).
\]
Since $h(t)$ is increasing and $h(t)/t$ is decreasing, we get
\[
\frac{1-|z_n|}{1-|z| \, |z_n|} \leq
\frac{h(1-|z_n|)}{h(1-|z|)}
\]
and thus
\[
\sum_{\big|\, |z|-|z_n| \,\big| \geq \delta}
\frac{1-|z_n|^2}{|1-\bar{z}_n \, z| \, |z-z_n|}
\leq
\frac{2\sum_{\big|\, |z|-|z_n| \,\big| \geq \delta}
h(1-|z_n|)}{\delta \,\, h(1-|z|)}
\leq
\frac{C}{\delta \,\, h(1-|z|)}.
\]
A generalized version of this estimation technique has been used in
\cite[Lemma 2.1]{fm1}. To estimate the second sum, we see that
\begin{eqnarray*}
\bigg|\, \frac{1-|z_n|^2}{(1-\bar{z}_n \, z)(z-z_n)} \,\bigg| &\leq&
\frac{2}{|z-z_n|}
\leq \frac{2}{(1-|z_n|)^\beta \, h(1-|z_n|)}\\ %
&\leq&  \frac{C}{(1-|z|)^\beta \, h(1-|z|)},
\end{eqnarray*}
and thus
\[
\bigg|\, \sum_{\big|\, |z|-|z_n| \,\big| < \delta}  \,
\frac{1-|z_n|^2}{(1-\bar{z}_n \, z)(z-z_n)} \,\bigg| \leq \,\, C
\,\,  \frac{n(|z|+\delta)-n(|z|-\delta)}{(1-|z|)^\beta \, h(1-|z|)},
\]
where $n(t)$ is the number of points $z_n$ lying in the disc $\{\, z
\,:\, |z| \leq t \,\}$. Therefore
\begin{equation} \label{E:estim11}
\bigg|\, \frac{B'(z)}{B(z)} \,\bigg| \leq
\frac{C}{h(1-|z|)} \,\,\bigg(\, \frac{1}{\delta} +
\frac{n(|z|+\delta)-n(|z|-\delta)}{(1-|z|)^\beta} \,\bigg)
\end{equation}
provided that $z \in \D$ with $|z| \not\in E$. The best choice of
$\delta$ depends on the counting function $n(t)$. We make a choice
for the most general case.

Assume that $\delta = (1-|z|)/2$. Our assumption
(\ref{E:blasconalgen}) on the rate of increase of zeros $z_n$ is
equivalent to
\[
\int_{0}^{1} \, h(1-t) \,\, dn(t) < \infty,
\]
and it is well known that this condition implies
\begin{equation} \label{E:s2Cin22}
n(t) = \frac{o(1)}{h(1-t)}
\end{equation}
as $t \to 1^-$. Therefore,
\begin{equation} \label{E:s2C22}
n(|z|+\delta)-n(|z|-\delta) \leq \frac{o(1)}{h(1-|z|)}.
\end{equation}
Hence, by (\ref{E:estim11}) and (\ref{E:s2C22}), we get the promised
growth for $B'/B$. To verify the size of $E$, note that
\begin{eqnarray*}
\int_{E} \,\, \frac{dt}{(1-t)^\beta}
&=& \sum_{n=1}^{\infty} \int_{|z_n|-(1-|z_n|)^\beta
h(1-|z_n|)}^{|z_n|+(1-|z_n|)^\beta h(1-|z_n|)}
\,\,\, \frac{dt}{(1-t)^\beta} \\
&=& \sum_{n=1}^{\infty} \int_{(1-|z_n|)-(1-|z_n|)^\beta
h(1-|z_n|)}^{(1-|z_n|)+ (1-|z_n|)^\beta h(1-|z_n|)}
\,\,\, \frac{d\tau}{\tau^\beta}\\
&\leq&  \,\, \sum_{n=1}^{\infty} \frac{2(1-|z_n|)^\beta h(1-|z_n|)}{(\,\, (1-|z_n|)- (1-|z_n|)^\beta h(1-|z_n|) \,\,)^\beta}\\
&\leq&   C \,\, \sum_{n=1}^{\infty} h(1-|z_n|) < \infty.
\end{eqnarray*}

\end{proof}

{\em Remark 1:} As the counting function $n(t) = 1/(1-t)^\alpha$
suggests, the assumption
\begin{equation} \label{E:s2C222}
n(|z|+\delta) - n(|z|-\delta) \leq C \,\, \frac{\delta \,
n(|z|)}{1-|z|}
\end{equation}
is fulfilled by a wide class of distribution of zeros. If
(\ref{E:s2C222}) holds, by (\ref{E:s2C22}) and (\ref{E:estim11})
with
\[
\delta = (1-|z|)^{\frac{1+\beta}{2}} \, h^{\frac{1}{2}}(1-|z|),
\]
we obtain
\[
\bigg|\, \frac{B'(z)}{B(z)} \,\bigg| =
\frac{O(1)}{(1-|z|)^{\frac{1+\beta}{2}} \, h^{\frac{3}{2}}(1-|z|)}
\]
as $|z| \to 1^-$ with $|z| \not\in E$.

{\em Remark 2:} Let us call $\varphi$ almost increasing if
$\varphi(x) \leq \mbox{Const} \, \varphi(y)$ provided that $x \leq
y$. Almost decreasing functions are defined similarly. As it can be
easily verified, Theorem \ref{T:caseles12} (and also Theorem
\ref{T:caseles13}) is still true if we assume that $h(t)$ is almost
increasing and $h(t)/t$ is almost decreasing.

\begin{Cor} \label{C:caseles12}
Let $\alpha \in (0,1]$, and $\alpha_1,\alpha_2,\cdots,\alpha_n \in
\R$.  Let  $(z_n)_{n \geq 1}$ be a sequence in the unit disc with
\[
\sum_{n=1}^{\infty} (1-|z_n|)^\alpha \,\, (\log
1/(1-|z_n|))^{\alpha_1} \,\,  \cdots \,\, \,\, (\log_n
1/(1-|z_n|))^{\alpha_n} < \infty
\]
and let $B$ be the Blaschke product formed with zeros $z_n$, $n \geq
1$. Let $\beta \geq 1$. Then there is an exceptional set $E \subset
(0,1)$ such that
\[
\int_{E} \,\, \frac{dt}{(1-t)^\beta} \,\, < \infty
\]
and that
\begin{equation} \label{E:b'balp}
\bigg|\, \frac{B'(z)}{B(z)} \,\bigg| =
\frac{o(1)}{(1-|z|)^{\beta+2\alpha} \, (\log 1/(1-|z|))^{2\alpha_1}
\,\,
 \cdots \,\, \,\, (\log_n
1/(1-|z|))^{2\alpha_n}}
\end{equation}
as $|z| \to 1^-$ with $|z| \not\in E$.
\end{Cor}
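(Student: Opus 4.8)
The plan is to obtain this corollary as a direct specialization of Theorem \ref{T:caseles12} to the prototype weight $h(t) = t^{\alpha}(\log 1/t)^{\alpha_1}\cdots(\log_n 1/t)^{\alpha_n}$. Once $h$ is known to satisfy conditions a) and b) (or their almost-monotone variants from Remark 2), Theorem \ref{T:caseles12} applies verbatim and yields
\[
\bigg|\, \frac{B'(z)}{B(z)} \,\bigg| = \frac{o(1)}{(1-|z|)^\beta \, h^2(1-|z|)},
\]
so that it only remains to compute $h^2$. Writing $t = 1-|z|$, one has $h^2(t) = t^{2\alpha}(\log 1/t)^{2\alpha_1}\cdots(\log_n 1/t)^{2\alpha_n}$, whence $(1-|z|)^\beta h^2(1-|z|)$ equals exactly the denominator appearing in (\ref{E:b'balp}). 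Thus the entire content of the corollary reduces to verifying the hypotheses on $h$, while the bound on $\int_E dt/(1-t)^\beta$ is inherited directly from the theorem.

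The substantive step is therefore checking the monotonicity properties. Since we only care about the behavior as $|z| \to 1^-$, i.e.\ as $t \to 0^+$, it suffices to verify them near $0$ and invoke Remark 2 on the remainder of the interval. I would proceed by logarithmic differentiation: since $(\log_k 1/t)'/\log_k 1/t = -1/(t\,\prod_{j=1}^{k}\log_j 1/t)$, one obtains
\[
\frac{t\,h'(t)}{h(t)} = \alpha - \sum_{k=1}^{n}\frac{\alpha_k}{\prod_{j=1}^{k}\log_j 1/t},
\]
and because each product $\prod_{j=1}^{k}\log_j 1/t \to \infty$, the sum tends to $0$; as $\alpha > 0$, the right-hand side is positive for small $t$, so $h$ is increasing near $0$. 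The analogous computation for $g(t) = h(t)/t$ gives
\[
\frac{t\,g'(t)}{g(t)} = (\alpha-1) - \sum_{k=1}^{n}\frac{\alpha_k}{\prod_{j=1}^{k}\log_j 1/t}.
\]

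Here the case distinction enters, and this is the only real obstacle. If $\alpha < 1$, the leading term $\alpha-1$ is negative and dominates the vanishing sum, so $g$ is decreasing near $0$. If $\alpha = 1$, the polynomial factor no longer provides any room: the leading term vanishes, and the sign of $t\,g'(t)/g(t)$ is governed by the term of the sum with the smallest index $k$ for which $\alpha_k \neq 0$, since that reciprocal product grows slowest and hence dominates. Decreasingness of $g$ is then equivalent to the positivity of that first nonzero exponent, which is precisely the hypothesis recorded after (\ref{E:defh}). Once both monotonicity statements are in hand, Theorem \ref{T:caseles12} (through Remark 2, so as to avoid modifying $h$ on a compact subinterval bounded away from $0$) delivers (\ref{E:b'balp}) together with the claimed size estimate for the exceptional set $E$.
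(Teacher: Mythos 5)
Your proposal is correct and is essentially the paper's own (implicit) argument: the paper treats Corollary \ref{C:caseles12} as an immediate specialization of Theorem \ref{T:caseles12} to the prototype weight $h(t)=t^\alpha(\log 1/t)^{\alpha_1}\cdots(\log_n 1/t)^{\alpha_n}$, simply asserting at the start of Section 2 that this $h$ satisfies conditions a) and b), and then reading off $(1-|z|)^\beta h^2(1-|z|)$ as the denominator in (\ref{E:b'balp}). Your logarithmic-differentiation verification of a) and b) near $t=0$ (including the role of the $\alpha=1$ convention recorded after (\ref{E:defh}) and the appeal to Remark 2 away from $0$) merely fills in details the paper leaves unstated.
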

\noindent In particular, if
\begin{equation} \label{E:alpha222}
\sum_{n=1}^{\infty} (1-|z_n|)^\alpha < \infty,
\end{equation}
then, for any $\beta \geq 1$, there is an exceptional set $E \subset
(0,1)$ such that
\begin{equation} \label{E:E222}
\int_{E} \,\, \frac{dt}{(1-t)^\beta}  \,\, < \infty
\end{equation}
and that
\[
\bigg|\, \frac{B'(z)}{B(z)} \,\bigg| =
\frac{o(1)}{\,\,(1-|z|)^{\beta+2\alpha}}
\]
as $|z| \to 1^-$ with $|z| \not\in E$. If $(|z_n|)_{n \geq 1}$ is an
interpolating sequence then
\[
1-|z_{n+1}| \leq c \, (1-|z_n|)
\]
for a constant $c<1$ \cite[Theorem 9.2]{Co83}. Hence,
(\ref{E:alpha222}) is satisfied for any $\alpha>0$ and thus, for any
$\beta \geq 1$ and for any $\varepsilon>0$, there is an exceptional
set $E$ satisfying (\ref{E:E222}) such that
\begin{equation} \label{E:E2232}
\bigg|\, \frac{B'(z)}{B(z)} \,\bigg| =
\frac{o(1)}{\,\,(1-|z|)^{\beta+\varepsilon}}
\end{equation}
as $|z| \to 1^-$ with $|z| \not\in E$. It is interesting to know if
in  (\ref{E:E2232}) we are able to replace $\varepsilon$ by zero.

\section{Radial Exceptional Sets}
Contrary to the preceding section, we now study the behavior of
\[
\bigg|\, \frac{B'(re^{i\theta})}{B(re^{i\theta})} \,\bigg|
\]
as $r \to 1$ for a {\em fixed} $\theta$. We obtain an upper bound
for the quotient $B'/B$ as long as $e^{i\theta} \in \T \setminus E$
where $E$ is an exceptional set of Lebesgue measure zero.

\begin{Thm} \label{T:caseles13}
Let $B$ be the Blaschke product formed with zeros $z_n=r_n
e^{i\theta_n}$, $n \geq 1$, satisfying
\[
\sum_{n=1}^{\infty} h(1-r_n) < \infty.
\]
Then there is an exceptional set $E \subset \T$ whose Lebesgue
measure $|E|$  is zero such that for all $z=re^{i\theta}$ with
$e^{i\theta} \in \T \setminus E$
\[
\bigg|\, \frac{B'(z)}{B(z)} \,\bigg| =  \frac{o(1)}{(1-|z|) \,\,
h(1-|z|)}
\]
as $|z| \to 1^-$.
\end{Thm}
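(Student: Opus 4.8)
The plan is to reduce the estimate to a real-variable sum and then split it according to how each modulus $r_n$ compares with $r=|z|$. Writing $d=1-|z|$ and $\rho_n=1-r_n$, and using the identity $|1-\bar z_n z|^2-|z-z_n|^2=(1-|z|^2)(1-|z_n|^2)\ge 0$, which gives $|z-z_n|\le|1-\bar z_n z|$, the series (\ref{E:estb'11}) is dominated by
\[
\bigg|\,\frac{B'(z)}{B(z)}\,\bigg|\le\sum_{n=1}^{\infty}\frac{1-|z_n|^2}{|z-z_n|^2}\le 2\sum_{n=1}^{\infty}\frac{\rho_n}{(d-\rho_n)^2+4r\,r_n\sin^2\frac{\theta-\theta_n}{2}}.
\]
Since $\sin^2(x/2)\ge x^2/\pi^2$, the theorem amounts to showing that, for almost every $\theta$, the quantity $d\,h(d)$ times this last sum tends to $0$ as $d\to 0^+$; this is exactly the asserted $o(1)$ bound.

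I would split the sum into three ranges: (i) $\rho_n\le d/2$, (ii) $\rho_n\ge 2d$, and (iii) $d/2<\rho_n<2d$. In range (i) the radial gap dominates, $(d-\rho_n)^2\ge d^2/4$, so each term is at most $4\rho_n/d^2$; using that $h(t)/t$ is decreasing (whence $\rho_n\le (d/h(d))\,h(\rho_n)$ for $\rho_n\le d$) gives $d\,h(d)\sum_{(\mathrm i)}\le 4\sum_{\rho_n\le d/2}h(\rho_n)$, a tail of the convergent series $\sum h(\rho_n)$ and hence $o(1)$. In range (ii) we have $(d-\rho_n)^2\ge\rho_n^2/4$, so each term is at most $4/\rho_n\le 2/d$; the number of such zeros is $n(1-2d)$, and the growth estimate (\ref{E:s2Cin22}), namely $n(t)=o(1)/h(1-t)$, together with $h(d)\le h(2d)$, yields $d\,h(d)\sum_{(\mathrm{ii})}=o(1)$. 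Crucially, neither (i) nor (ii) requires an exceptional set.

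The whole difficulty is range (iii), the zeros lying at essentially the same modulus as $z$: there $(d-\rho_n)^2$ may nearly vanish and only angular separation can help. Here I introduce the exceptional set
\[
E=\bigcap_{k\ge 1}\ \bigcup_{n\ge k}\,\{\,e^{i\theta}\in\T:\ |\theta-\theta_n|<\rho_n\,\}.
\]
The key fact is that $\sum_n\rho_n<\infty$: since $h(t)/t$ is decreasing, $h(\rho_n)\ge c\,\rho_n$ for all but finitely many $n$, so $\sum\rho_n<\infty$ follows from $\sum h(\rho_n)<\infty$. Consequently $|E|\le\lim_k 2\sum_{n\ge k}\rho_n=0$. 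For $e^{i\theta}\notin E$ there is $k_0$ with $|\theta-\theta_n|\ge\rho_n$ for all $n\ge k_0$; and once $d$ is so small that $2d<\min_{n<k_0}\rho_n$, every zero counted in range (iii) has index $\ge k_0$, hence $\sin^2\frac{\theta-\theta_n}{2}\gtrsim\rho_n^2$ and each term is $\lesssim 1/\rho_n\le 2/d$. As in range (ii), the number of such zeros is at most $n(1-d/2)=o(1)/h(d/2)$, and $h(d)\le 2h(d/2)$ then gives $d\,h(d)\sum_{(\mathrm{iii})}=o(1)$.

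Adding the three contributions shows $d\,h(d)\,|B'(z)/B(z)|\to 0$ for every $e^{i\theta}\in\T\setminus E$, which is the claim. The main obstacle is precisely the construction in range (iii): one needs an exceptional set that simultaneously captures the directions along which zeros cluster at the critical modulus and is small enough to be null. The resolution rests on the observation that the hypotheses on $h$ force $\sum\rho_n<\infty$, which is exactly what makes the natural $\limsup$ set $E$ of Lebesgue measure zero; all remaining bookkeeping is handled by the counting-function estimate (\ref{E:s2Cin22}) already available from the previous section.
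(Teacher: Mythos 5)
Your proof is correct. It rests on the same two pillars as the paper's own argument: a Borel--Cantelli exceptional set, namely the $\limsup$ of arcs of length comparable to $1-r_n$ centered at $e^{i\theta_n}$ (null because $\sum(1-r_n)<\infty$, which, as you note, follows from $\sum h(1-r_n)<\infty$ and the monotonicity of $h(t)/t$), together with the two estimates already used in Section 2: the $h$-monotonicity trick for zeros closer to the circle than $z$, and the counting-function bound $n(t)=o(1)/h(1-t)$ for the remaining zeros. The paper realizes the same arcs as radial projections $I_n$ of the regions $U_n=\{\,z\in\D : 1-|z|>C|z-z_n|\,\}$, which coincide with yours up to constants, so the exceptional sets are essentially identical. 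Where you genuinely diverge is in the decomposition of the sum. The paper splits at the single radius $R=(1+|z|)/2$, treats everything inside that radius (your ranges (ii) and (iii) together) via the exceptional-set separation $1-|z|\le C|z-z_n|$, and must then handle the finitely many indices $n<N$ not controlled by the $\limsup$ property as a separate sum, bounded because $\theta$ is fixed. Your three-way split by modulus shows that the exceptional set is needed only for the critical annulus $d/2<\rho_n<2d$: zeros with $\rho_n\ge 2d$ are controlled by radial separation $(d-\rho_n)^2\ge\rho_n^2/4$ alone, and this finer split also absorbs the finite head automatically, since once $d$ is small those indices exit range (iii) and no separate boundedness argument is needed. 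The trade-off is organizational rather than substantive; neither route is shorter, but yours makes explicit exactly where angular separation---and hence the exceptional set---is indispensable, while the paper's formulation via the regions $U_n$ packages the geometry into the single statement $|I_n|\le C'(1-r_n)$.
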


\begin{proof}
Let us consider the open set
\[
U_n = \{\, z \in \D \,:\, (1-|z|) > C|z-z_n| \,\}
\]
with $C>1$, and we define
\[
I_n =\{\, \zeta \in \T \,:\, \exists z \in U_n \,\, \&\, \zeta=z/|z|
\,\}.
\]
In other words, $I_n$ is the radial projection of $U_n$ on the unit
circle $\T$. Then we know that
\begin{equation} \label{E:sizein}
|I_n| \leq C' (1-r_n),
\end{equation}
where $C'$ is a constant just depending on $C$. Let
\[
E = \bigcap_{n=1}^{\infty} \bigcup_{k=n}^{\infty} I_k.
\]
By (\ref{E:sizein}), we see that $|E|=0$.

Fix $z \in \D$ with $z/|z| \not\in E$. Hence, there is $N$ such that
$z/|z| \not\in I_k$ for all $k \geq N$. Let $R = (1+|z|)/2$. Now, we
write
\[
\frac{B'(z)}{B(z)} = \bigg(\, \sum_{|z_n| \geq R}+\sum_{|z_n| <R, \,
n \geq N}+\sum_{n=1}^{N-1} \,\bigg) \, \frac{1-|z_n|^2}{(1-\bar{z}_n
\, z)(z-z_n)},
\]
and as in the preceding case
\begin{equation} \label{E:estim1=1}
\sum_{|z_n| \geq R} \frac{1-|z_n|^2}{|1-\bar{z}_n \, z| \, |z-z_n|}
\leq
\frac{o(1)}{(1-|z|) \,\, h(1-|z|)}.
\end{equation}

\noindent To estimate the second sum, we see that
\[
\bigg|\, \frac{1-|z_n|^2}{(1-\bar{z}_n \, z)(z-z_n)} \,\bigg| \leq
\frac{2}{|z-z_n|}
\leq \frac{2C}{1-|z|},  \hspace{1cm} (|z| \not\in E),
\]
and thus, by (\ref{E:s2Cin22}),
\begin{equation} \label{E:s2C=1}
\bigg|\, \sum_{|z_n| <R, \, n \geq N}  \,
\frac{1-|z_n|^2}{(1-\bar{z}_n \, z)(z-z_n)} \,\bigg| \leq \frac{2C
\, n(R)}{1-|z|}\leq \frac{o(1)}{(1-|z|) \, h(1-|z|)}.
\end{equation}
Since the last sum is uniformly bounded ($\theta$ is fixed),
(\ref{E:estim1=1}) and (\ref{E:s2C=1}) give the required result.
\end{proof}

\begin{Cor} \label{C:caseles13}
Let $\alpha \in (0,1]$, and $\alpha_1,\alpha_2,\cdots,\alpha_n \in
\R$. If $\alpha=1$ the first nonzero number among
$\alpha_1,\alpha_2, \cdots, \alpha_n$ is positive. Let $B$ be the
Blaschke product formed with zeros $z_n=r_n e^{i\theta_n}$, $n \geq
1$, satisfying
\[
\sum_{n=1}^{\infty} (1-r_n)^\alpha \,\, (\log 1/(1-r_n))^{\alpha_1}
\,\,  \cdots \,\, \,\, (\log_n 1/(1-r_n))^{\alpha_n} < \infty.
\]
Then there is an exceptional set $E \subset \T$ whose Lebesgue
measure $|E|$  is zero such that for all $z=re^{i\theta}$ with
$e^{i\theta} \in \T \setminus E$
\begin{equation} \label{E:b'balp}
\bigg|\, \frac{B'(z)}{B(z)} \,\bigg| =
\frac{o(1)}{(1-|z|)^{1+\alpha} \, (\log 1/(1-|z|))^{\alpha_1} \,\,
 \cdots \,\, \,\, (\log_n
1/(1-|z|))^{\alpha_n}}
\end{equation}
as $|z| \to 1^-$.
\end{Cor}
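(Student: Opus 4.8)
The plan is to read Corollary \ref{C:caseles13} as the direct specialization of Theorem \ref{T:caseles13} to the prototype weight
\[
h(t) = t^\alpha \, (\log 1/t)^{\alpha_1} \cdots (\log_n 1/t)^{\alpha_n}
\]
of (\ref{E:defh}). With this choice the summability hypothesis is literally the Blaschke-type condition $\sum_n h(1-r_n) < \infty$ required by the theorem, and the denominator in the conclusion factors as
\[
(1-|z|)^{1+\alpha} (\log 1/(1-|z|))^{\alpha_1} \cdots (\log_n 1/(1-|z|))^{\alpha_n} = (1-|z|) \, h(1-|z|).
\]
Thus, once $h$ is known to meet the hypotheses of Theorem \ref{T:caseles13}, the corollary follows by pure substitution.

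First I would verify that $h$ fulfils conditions a) and b) of Section 2 --- or, more economically, their almost-monotone relaxations licensed by Remark 2 --- on a neighbourhood $(0,\epsilon)$ of the origin, which is all that matters since $1-r_n \to 0$ and $1-|z| \to 0$. Differentiating logarithmically, the computation gives
\[
\frac{h'(t)}{h(t)} = \frac{1}{t}\left( \alpha - \sum_{j=1}^{n} \frac{\alpha_j}{\prod_{i=1}^{j} \log_i(1/t)} \right),
\]
and since every $\log_i(1/t) \to \infty$ as $t \to 0^+$, the parenthetical factor tends to $\alpha > 0$. Hence $h'>0$ near $0$, so $h$ is positive, continuous, increasing, and $h(0+)=0$, for every admissible choice of exponents.

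The decisive point --- and the only genuine obstacle --- is the monotonicity of $h(t)/t$. The analogous computation yields
\[
\frac{(h(t)/t)'}{h(t)/t} = \frac{1}{t}\left( (\alpha-1) - \sum_{j=1}^{n} \frac{\alpha_j}{\prod_{i=1}^{j} \log_i(1/t)} \right).
\]
When $\alpha < 1$ the bracket tends to $\alpha - 1 < 0$, so $h(t)/t$ is decreasing near $0$ irrespective of the $\alpha_j$. When $\alpha = 1$ the leading constant vanishes and the sign is dictated by the slowest-decaying term of the sum: since $\prod_{i=1}^{j}\log_i(1/t)$ increases with $j$, that term is the one indexed by the first $j_0$ with $\alpha_{j_0} \neq 0$, and it contributes $-\alpha_{j_0}/\prod_{i=1}^{j_0}\log_i(1/t)$, which is negative precisely when $\alpha_{j_0} > 0$ --- exactly the sign hypothesis imposed in the statement. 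Hence in all admissible cases $h(t)/t$ is decreasing on some $(0,\epsilon)$; extending $h$ continuously and positively to all of $(0,1)$ renders it almost increasing with $h(t)/t$ almost decreasing, so Remark 2 applies. Feeding this $h$ into Theorem \ref{T:caseles13} and invoking the denominator identity above yields the stated estimate, completing the proof.
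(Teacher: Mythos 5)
Your proposal is correct and takes essentially the same route as the paper: Corollary \ref{C:caseles13} is exactly Theorem \ref{T:caseles13} applied to the prototype weight $h(t)=t^\alpha(\log 1/t)^{\alpha_1}\cdots(\log_n 1/t)^{\alpha_n}$ of (\ref{E:defh}), whose properties a) and b) the paper simply asserts at the start of Section 2 rather than verifying. Your logarithmic-derivative check of those properties near $t=0$ --- including the observation that for $\alpha=1$ the sign of $(h(t)/t)'$ is governed by the first nonzero $\alpha_j$, which is precisely where the positivity hypothesis enters, and the appeal to Remark 2 to handle the extension to $(0,1)$ --- is the only added detail, and it is carried out correctly.
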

\noindent In particular, if
\[
\sum_{n=1}^{\infty} (1-r_n)^\alpha < \infty,
\]
then there is an exceptional set $E \subset \T$ whose Lebesgue
measure $|E|$  is zero such that for all $z=re^{i\theta}$ with
$e^{i\theta} \in \T \setminus E$
\begin{equation} \label{E:b'balp}
\bigg|\, \frac{B'(z)}{B(z)} \,\bigg| =
\frac{o(1)}{(1-|z|)^{1+\alpha}}
\end{equation}
as $|z| \to 1^-$.

\noindent {\bf Remark:} Theorems \ref{T:caseles12} and
\ref{T:caseles13} can be easily generalized to obtain estimates for
\[
\frac{B^{(k)}(z)}{B^{(j)}(z)}
\]
as $|z| \to 1^-$. This is a standard technique which can been find
for example in \cite{kutbi02, Li76}. 

\end{document}